\newcommand{\bbC}{\mathbb{C}}
\newcommand{\bbZ}{\mathbb{Z}}
\newcommand{\bbQ}{\mathbb{Q}}
\newcommand{\bbF}{\mathbb{F}}
\newcommand{\bbN}{\mathbb{N}}
\newcommand{\cA}{\mathcal{A}}
\newcommand{\Span}{\operatorname{Span}}
\newcommand{\m}{\mathfrak{m}}
\newcommand{\Aut}{\operatorname{Aut}}
\newcommand{\Spec}{\operatorname{Spec}}
\newcommand{\Gal}{\operatorname{Gal}}
\newcommand{\sep}{\operatorname{sep}}
\newcommand{\tor}{\operatorname{tor}}
\newcommand{\ch}{\operatorname{char}}
\newcommand{\GL}{\mathrm{GL}}
\newtheorem{thm}{Theorem}
\newtheorem{cor}[thm]{Corollary}
\newtheorem{prop}[thm]{Proposition}
\newtheorem{conj}[thm]{Conjecture}
\newtheorem*{claim*}{Claim}
\begin{document}

\title[Hales-Jewett and Field Arithmetic]{Some Applications of the Hales-Jewett Theorem to Field Arithmetic}
\author{Bo-Hae Im and Michael Larsen}
\date{\today}
\address{Department of Mathematics, Chung-Ang University, 221, Heukseok-dong, Dongjak-gu, Seoul, 156-756, South Korea}\email{bohaeim@gmail.com}
  \address{Department of Mathematics, Indiana University, Bloomington,
Indiana 47405, USA} \email{mjlarsen@indiana.edu}
\subjclass[2000]{05D10; 11G05; 12E30}
\thanks{Bo-Hae Im was supported by the National Research Foundation of Korea Grant funded by the Korean Government(MEST) (NRF-2011-0015557). Michael Larsen was partially supported by NSF grants DMS-0800705 and DMS-1101424.}

\keywords{Hales-Jewett theorem, field arithmetic, elliptic curve, hyperelliptic curve}
\begin{abstract}
Let $K$ be a field whose absolute Galois group is finitely generated.  If $K$ neither finite nor of characteristic $2$, then every hyperelliptic
curve over $K$ with all of its Weierstrass points defined over $K$
has infinitely many $K$-points.  If, in addition, $K$ is not locally finite, then every elliptic curve over $K$
with all of its $2$-torsion rational has infinite rank over $K$.  These and similar results are deduced from the Hales-Jewett theorem.
\end{abstract}

\maketitle

\section{Introduction}

This paper is motivated by the following conjecture:

\begin{conj}
\label{Conjecture}
If $K$ is a field which is not locally finite and such that $\Gal(K^{\sep}/K)$ is topologically finitely generated,
then every non-trivial elliptic curve $E/K$ has
infinite rank over $K$.
\end{conj}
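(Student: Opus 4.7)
The plan is to attempt to reduce the conjecture to the special case treated in this paper, in which the elliptic curve has all its $2$-torsion rational. Set $L := K(E[2])$, so that $L/K$ is a Galois extension with $\Gal(L/K)$ embedding into $S_3$, and $E_L$ has all $2$-torsion $L$-rational. Since $[L:K]\le 6$ is finite and $\Gal(K^{\sep}/K)$ is topologically finitely generated, the open subgroup $\Gal(K^{\sep}/L)$ is also topologically finitely generated, and $L$, being a finite extension of $K$, is still not locally finite. Applying the main theorem of the paper to $E_L/L$ then gives that $E(L)$ has infinite rank.

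The principal obstacle is descending from $E(L)$ to $E(K)$. As a $\bbQ[\Gal(L/K)]$-module,
$$E(L)\otimes_{\bbZ}\bbQ \;=\; \bigoplus_{\chi} V_\chi,$$
with the trivial isotypic component equal to $E(K)\otimes\bbQ$ and the other isotypic components corresponding to Mordell--Weil groups of twists of $E$ by $L/K$. The infinite rank produced by the main theorem could in principle be concentrated entirely in the non-trivial components: when $\Gal(L/K) = \bbZ/2\bbZ$, for example, $E(L)\otimes\bbQ = (E(K) \oplus E^d(K))\otimes\bbQ$ for $E^d$ the quadratic twist of $E$, and abstract descent alone cannot distinguish the two factors.

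To get around this, I would try to make the Hales--Jewett construction $\Gal(L/K)$-equivariant, so that the combinatorial configurations of $2$-torsion data used to produce elements of $E(L)$ are $\Gal(L/K)$-stable and the resulting points are automatically defined over $K$. Equivariant versions of Hales--Jewett are available (for instance by applying the classical statement to the orbit space under the action of $\Gal(L/K)$ on the alphabet, then lifting equivariantly), so the combinatorial step should go through once the alphabet is equipped with the action of $\Gal(L/K)$ on $E[2]$. The hardest step will then be establishing that the $K$-points constructed this way are linearly \emph{independent}, rather than merely nonzero. The usual independence arguments proceed by specializing to residue fields and analyzing Frobenius on the reductions; to detect the trivial isotypic component of $\Gal(L/K)$ specifically, one would need to restrict to places satisfying a Chebotarev-type condition and then carry out a refined reduction analysis that separates isotypic pieces. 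Whether this refinement can be made to work uniformly is, I suspect, exactly the technical gap that prevents the present paper from reaching the full conjecture.
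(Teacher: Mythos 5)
First, be aware that the statement you are addressing is stated in the paper as Conjecture~\ref{Conjecture}; the authors do not prove it, and there is no proof in the paper to compare yours against. What the paper actually establishes (Theorem~\ref{Main} and its first corollary) is precisely the special case your first paragraph reduces to: an elliptic curve with $E[2](K)\cong(\bbZ/2\bbZ)^2$ over a field that is not locally finite, of characteristic different from $2$, with finitely generated Galois group. Your reduction to $L=K(E[2])$ is sound as far as it goes --- $\Gal(K^{\sep}/L)$ is an open subgroup of a topologically finitely generated profinite group and hence topologically finitely generated, and $L$ is not locally finite --- so you do correctly obtain that $E$ has infinite rank over $L$. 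But, as you yourself say, this is not the conjecture, and the descent from $E(L)$ to $E(K)$ is a genuine missing idea, not a technicality. (Note also that the conjecture does not exclude characteristic $2$, where the entire split-hyperelliptic mechanism is unavailable.)

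The proposed repair --- an equivariant Hales--Jewett argument --- does not close the gap, and it is worth seeing concretely why. The engine of Proposition~\ref{MainProp} is that $\prod_j(x-a_j)$ lands in $(K^\times)^2$ for a suitably chosen $x$, with finiteness of $K^\times/(K^\times)^2$ supplied by Kummer theory; this forces the existence of a $K$-rational $y$. If the Weierstrass points $a_j$ are only defined over $L$ and are permuted by $\Gal(L/K)$, then the linear combinations $b_1a_{i_1}+\cdots+b_Na_{i_N}$, the partition of $[1,2g+2]^N$ by square classes, and the square classes themselves all live over $L$. Even if you arrange, by passing to $\Gal(L/K)$-orbits on the alphabet, that the resulting $x$-coordinate lies in $K$, the conclusion you extract is only that $\prod_j(x-a_j)\in(L^\times)^2$, i.e., that $(x,y)$ is an $L$-point. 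Viewed over $K$, this says that $x$ is the $x$-coordinate of a $K$-point of \emph{some} quadratic twist of $E$, determined by the class of $\prod_j(x-a_j)$ in $K^\times/(K^\times)^2$ --- which is exactly the twist ambiguity you described, reappearing at the level of the construction rather than at the level of abstract isotypic decomposition. Nothing in the combinatorics selects the trivial square class, so the equivariant version of Hales--Jewett does not by itself pick out the trivial isotypic component of $E(L)\otimes\bbQ$. Your write-up is an honest and well-motivated reduction together with a correctly identified obstruction, but it is not a proof of the conjecture and should be presented as a strategy, not a result.
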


Frey and Jarden \cite{FJ} used probabilistic methods to prove that for a given $E$,
the rank is infinite for ``most'' such fields $K$ over which $E$ is defined.  In a series of papers \cite{Larsen-LMS,Im-PAMS,Im-CJM,IL-AJM},
the authors used methods of Diophantine geometry to prove this result whenever $\Gal(K^{\sep}/K)$ is topologically cyclic (assuming the characteristic of $K$ is not $2$.)
A different line of investigation \cite{Im-TAMS,IB-CJM} gave similar results using Heegner point methods.
Neither approach offers much hope of progress in the case where $\Gal(K^{\sep}/K)$ requires two or more generators.
A third method was suggested by the Tim and Vladimir Dokchitser \cite{DD}, who proved the result for any characteristic zero field $K$, provided
the $j$-invariant of $E$ is an algebraic number.  Their proof is conditional, however, requiring some part of the Birch-Swinnerton-Dyer conjecture,
either the equality of analytic and
Mordell-Weil ranks or the finiteness conjecture for Tate-Shafarevich groups.  In this paper, we introduce a fourth method, based on the Hales-Jewett theorem in combinatorics,
which offers unconditional results for any number of generators, though at present an additional hypothesis on $E$ is needed.

We recall that a field $K$ is said to be \emph{ample} if every smooth curve $X/K$ with a non-singular $K$-point satisfies $|X(K)| = \infty$.
A conjecture of Junker and Koenigsmann \cite{JK} asserts that every infinite field with finitely generated Galois group is ample.
This would imply Conjecture \ref{Conjecture} when $K$ is of characteristic zero \cite[Proposition 2.4]{BF}.  In this paper, we
prove that certain kinds of pointed curves (namely split hyperelliptic curves) have infinitely many points over any infinite field
with finitely generated Galois group.  This supports the Junker-Koenigsmann conjecture, though of course split hyperelliptic curves are rather special.
In general, we can prove infinite rank for those abelian varieties which contain split hyperelliptic curves, again, a rather special
set of examples.

\section{Density for hyperelliptic curves}

Let $K$ be a field not of characteristic $2$.
We fix a separable closure, which we denote $K^{\sep}$ and
let $G_K := \Gal(K^{\sep}/K)$.

By a \emph{split hyperelliptic curve}, we mean a curve $X/K$
which has an affine open set of the form
$$\Spec K[x,y]/(y^2-(x-a_1)\cdots(x-a_{2g+2})),$$
where $g\in\bbN$ and the $a_i$ are pairwise distinct elements of $K$.

\begin{prop}
\label{MainProp}
If $K$ is not of characteristic $2$ and
$G_K$ is  finitely generated (as a topological group), then for every split hyperelliptic
curve $X/K$, there exist a finite subset $C\subset K$ and a finite set $L\subset K[t]$ of
degree $1$ polynomials such that for all $c\in K\setminus C$ there exist $\lambda\in L$
and a point in $X(K)$ whose $x$-coordinate is $\lambda(c)$.
\end{prop}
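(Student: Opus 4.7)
The plan is to parametrize a large family of candidate $x$-coordinates on $X$ by strings $I \in \{1,\dots,2g+2\}^N$, to apply the Hales-Jewett theorem to a coloring by square classes, and to extract a $K$-point from the resulting monochromatic combinatorial line. The algebraic fact that drives the extraction is that $k := 2g+2$ is even, so that on a monochromatic line of length $k$ the product of the $k$ copies of any fixed class in $H := K^\times/(K^\times)^2$ is automatically trivial.

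First, $H$ is finite: by Kummer theory (using $\ch K \ne 2$), $H$ is in bijection with continuous homomorphisms $G_K \to \bbZ/2$, hence with open index-$2$ subgroups of $G_K$, of which there are finitely many since $G_K$ is topologically finitely generated. Let $N$ be the Hales-Jewett number for alphabet size $k$ and $|H|^k$ colors. Assuming $K$ is infinite (the finite case is trivial), I choose $b_1,\dots,b_N \in K^\times$ avoiding finitely many hyperplanes, so that $B_T := \sum_{s\in T} b_s \ne 0$ for every nonempty $T \subseteq [N]$. Let $C \subset K$ be the finite set of $c$ for which $c + \sum_s b_s a_{i_s} \in \{a_1,\dots,a_k\}$ for some $I \in [k]^N$. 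For each $c \in K \setminus C$, define the coloring
$$\chi_c:[k]^N \to H^k, \qquad \chi_c(I) := \bigl([\,c + \textstyle\sum_s b_s a_{i_s} - a_j\,]\bigr)_{j=1}^k.$$
By Hales-Jewett there is a monochromatic combinatorial line described by a template $(T,(i_s^0)_{s\notin T})$, along which the $k$ points $x_\sigma := c' + B a_\sigma$ (with $c' := c + \sum_{s\notin T} b_s a_{i_s^0}$ and $B := B_T$) all share the same color, so $[x_\sigma - a_j] = \gamma_j \in H$ is independent of $\sigma$. Consequently $\prod_\sigma (x_\sigma - a_j) \in \gamma_j^k (K^\times)^2 = (K^\times)^2$ since $H$ is $2$-torsion and $k$ is even. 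Setting $\hat a_j := (a_j - c')/B$, a direct computation yields $\prod_\sigma (c' + B a_\sigma - a_j) = B^k f(\hat a_j)$, where $f(t) := \prod_\sigma (t - a_\sigma)$ defines $X$. Since $B^k$ is a square, $f(\hat a_j)$ is a nonzero square in $K$, and $\hat a_j$ is the $x$-coordinate of a $K$-point on $X$. Writing $\hat a_j = -c/B + (a_j - \sum_{s\notin T} b_s a_{i_s^0})/B$ exhibits $\hat a_j$ as the value at $c$ of a degree-$1$ polynomial in $K[t]$ whose coefficients depend only on the template and on $j$; since there are only finitely many templates and finitely many $j$'s, the resulting set $L$ is finite.

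The delicate step I anticipate is the choice of coloring. Coloring by the single product class $[\prod_j (x_I - a_j)]$ would only give that the common class along a monochromatic line has order dividing $k$, which is vacuous when $k$ is even. The refined coloring by the tuple of individual factors $([x_I - a_j])_j$ lets me multiply along the line and exploit that $k$ is even to force a trivial class. Combined with the additive parametrization $x_I = c + \sum_s b_s a_{i_s}$, which keeps $\hat a_j$ linear in $c$ with only finitely many possible coefficients, this is what yields the uniform finite set $L$ of degree-$1$ polynomials required by the proposition.
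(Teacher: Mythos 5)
Your proposal is correct and follows essentially the same route as the paper: finiteness of $K^\times/(K^\times)^2$ via Kummer theory, the Hales--Jewett theorem applied to $[1,2g+2]^N$ colored by square classes of the affine combinations $c\pm\sum_s b_s a_{i_s}$, generically chosen $b_s$ with nonvanishing subset sums, and the evenness of $2g+2$ to trivialize the common class along the monochromatic line. The only (harmless) deviation is that the paper colors each string by the single class $[c-\sum_s b_s a_{i_s}]$ and extracts the one point $x=(c-r_c)/s_c$, whose factors $x-a_j$ are exactly the values along the line, whereas you color by the $k$-tuple $\bigl([x_I-a_j]\bigr)_j$ (costing a larger Hales--Jewett number, which is immaterial) and extract the points $(a_j-c')/B$; both rest on the same identity expressing a product over the line as a $(2g+2)$-nd power of a unit times $f$ evaluated at an affine function of $c$.
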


\begin{proof}
As $G_K$ is finitely generated, by Kummer theory, $Q := K^\times/(K^\times)^2$ is finite.
For each non-zero $c\in K$, we write $[c]$ for the class of $c$ in $Q$.
Let $[1,n]$ denote the set of positive integers from $1$ to $n$.
By the Hales-Jewett theorem, there exists $N$ such that for every partition of
$Z := [1,2g+2]^N$ into sets $Y_q$ indexed by elements $q\in Q$, there exist $q\in Q$ and
functions $f_1,\ldots,f_N\colon [1,2g+2]\to [1,2g+2]$ such that the following conditions hold.
\begin{itemize}
\item Each $f_i$ is either constant or the identity function.
\item At least one $f_i$ is non-constant.
\item For all $j\in[1,2g+2]$, $(f_1(j),f_2(j),\ldots,f_N(j))\in Y_q$.
\end{itemize}

Let $X$ have an affine open set of the form
$$y^2 = (x-a_1)\cdots(x-a_{2g+2}),$$
where $g\in\bbN$ and the $a_i$ are pairwise distinct elements of $K$.
We fix elements $b_1,\ldots,b_N\in K$ such that no non-trivial subset of the $b_i$ sum to zero.
This is possible since $K$ is infinite.
Define
$$C := \{b_1a_{i_1}+\cdots+b_N a_{i_N}\mid i_1,\ldots,i_N\in [1,2g+2]\}.$$
For all $c\in K\setminus C$, we can define a partition $\{Y_{c,q}\}_{q\in Q}$ of $Z$ by
$$Y_{c,q} := \{(i_1,\ldots,i_N)\in Z\mid [c-b_1a_{i_1}-\cdots-b_N a_{i_N}] = q\}.$$
Let $f_{c,1},\ldots,f_{c,N}$ denote functions satisfying the  Hales-Jewett
theorem for the partition $\{Y_{c,q}\}_{q\in Q}$, and
let $S_c\subset [1,N]$ denote the (non-empty) set of indices $i$
such that $f_{c,i}$ is the identity function.  Explicitly, for $i\in S_c$, $f_{c,i}(j) = j$, while for $i\in [1,N]\setminus S_c$, $f_{c,i}(j) = f_{c,i}(1)$.

We define
$$r_c := \sum_{i\in [1,N]\setminus S_c} b_i a_{f_{c,i}(1)}$$
and
$$s_c := \sum_{i\in S_c} b_i\neq 0.$$
Thus,
\begin{align*}
c-b_1a_{f_{c,1}(j)}-\cdots-b_N a_{f_{c,N}(j)} &= c-\sum_{i\in [1,N]\setminus S_c} b_i a_{f_{c,i}(1)}-  \sum_{i\in S_c} b_i a_j \\
							&= c - r_c - s_c a_j.
\end{align*}

For all $j\in [1,2g+2]$,
$$(f_{c,1}(j),\ldots,f_{c,N}(j))\in Y_{c,q},$$
so
$$q = [c-b_1a_{f_{c,1}(j)}-\cdots-b_N a_{f_{c,N}(j)}] = [c - r_c - s_c a_j].$$
It follows that
$$\prod_{j=1}^{2g+2} \Bigl(\frac{c-r_c}{s_c}-a_j\Bigr) = s_c^{-2(g+1)} \prod_{j=1}^{2g+2} (c - r_c-s_ca_j) \in (K^\times)^2,$$
and therefore that $X(K)$ contains a point $(x,y)$ with $x=(c-r_c)/s_c$.
The proposition now holds for
$$L := \biggl\{\frac{t-\sum_{i\in [1,N]\setminus S} b_i a_{f_{c,i}(1)}}{\sum_{i\in S} b_i}\biggm| \emptyset\neq S\subset [1,N]\biggr\}.$$

\end{proof}

\begin{thm}
If $K$ is infinite and not of characteristic $2$ and $G_K$ is  finitely generated, then for every split hyperelliptic
curve $X/K$, we have $|X(K)| = \infty$.
\end{thm}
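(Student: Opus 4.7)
The plan is to deduce the theorem as an almost immediate corollary of Proposition \ref{MainProp}. Since $K$ is infinite and $C$ is finite, the set $K \setminus C$ is infinite. By the proposition, for each $c \in K \setminus C$ there is a point of $X(K)$ whose $x$-coordinate is $\lambda_c(c)$ for some $\lambda_c \in L$. So to prove $|X(K)| = \infty$, it suffices to produce infinitely many distinct $x$-coordinates arising in this way; the projection $X \to \mathbb{A}^1$ is finite of degree $2$, so distinct $x$-coordinates give distinct points of $X(K)$.

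To produce infinitely many distinct $x$-coordinates, I would argue by pigeonhole on the finite set $L$. Since $L$ is finite and $K \setminus C$ is infinite, there exists $\lambda \in L$ such that the set $\{c \in K \setminus C : \lambda_c = \lambda\}$ is infinite. Now $\lambda$ is a degree $1$ polynomial in $t$; examining the explicit formula for $L$ in the proof of Proposition \ref{MainProp}, its leading coefficient is $1/\sum_{i \in S} b_i$ for some nonempty $S \subseteq [1,N]$, which is nonzero by our choice of $b_1,\ldots,b_N$ (no nonempty sub-sum vanishes). Therefore $\lambda \colon K \to K$ is injective, and its image on an infinite set is infinite, producing infinitely many distinct $x$-coordinates of $K$-points on $X$.

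There is really no obstacle here, because all the combinatorial work has already been done in Proposition \ref{MainProp}. The only mild subtlety is checking that the polynomials in $L$ are genuinely of degree $1$ (not constants), which reduces to the nonvanishing of the subset sums of $b_1,\ldots,b_N$ — a property built into their choice in the proof of the proposition. Once this is noted, the theorem follows from infiniteness of $K \setminus C$, finiteness of $L$, and injectivity of nonconstant affine maps.
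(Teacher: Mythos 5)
Your proposal is correct and follows essentially the same route as the paper: both arguments combine the finiteness of $L$ with the injectivity of the degree‑$1$ maps $\lambda$ (guaranteed by the nonvanishing subset sums of the $b_i$) to conclude that infinitely many distinct $x$-coordinates of $K$-points arise from the infinite set $K\setminus C$. The paper phrases this as ``for each $s$ and $\lambda$ there is at most one $c$ with $\lambda(c)=s$'' rather than your pigeonhole-then-injectivity formulation, but the content is identical.
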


\begin{proof}
Let $S$ denote the set of $x\in K$ such that $(x-a_1)\cdots(x-a_{2g+2})\in (K^\times)^2$.
We apply Proposition~\ref{MainProp} to obtain finite sets of constants $C\subset K$ and affine linear functions $L\subset K[t]$
such that for all $c\in K\setminus C$, there exists $\lambda\in L$ such that $\lambda(c)\in S$.
For any $s\in S$ and $\lambda\in L$. there exists at most one $c\in K\setminus C$ such that $\lambda(c)=s$.
Therefore, if $K$ is infinite, $S$ must be infinite as well.
\end{proof}

The following corollary follows immediately.

\begin{cor}
Let $X$ be a hyperelliptic curve over an infinite field $K$ not of characteristic $2$.  Then for all $n\in \bbN$, the subset
$$\{(\sigma_1,\ldots,\sigma_n)\in G_K^n\colon |X(K^{\sep})^{\langle \sigma_1,\ldots,\sigma_n\rangle}| = \infty\}\subset G_K^n$$
has non-empty interior.
\end{cor}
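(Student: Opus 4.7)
The idea is to exhibit a non-empty open subset of $G_K^n$ contained in the set in question. Since $X$ is hyperelliptic, its $2g+2$ Weierstrass points are defined over a finite Galois extension $K'/K$; after a $K'$-rational change of coordinates on the hyperelliptic quotient $\bbP^1$ (moving any Weierstrass point at infinity to a $K'$-rational finite value), we may assume $X/K'$ is presented by an affine equation $y^2 = (x-a_1)\cdots(x-a_{2g+2})$ with $a_i\in K'$ pairwise distinct, so that $X/K'$ is a split hyperelliptic curve in the sense of Section~2. The open subgroup $G_{K'}\leq G_K$ has finite index, and I propose that $U := G_{K'}^n$ is the required non-empty open subset.

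Given any tuple $(\sigma_1,\ldots,\sigma_n)\in U$, let $H := \overline{\langle \sigma_1,\ldots,\sigma_n\rangle}$ be its topological closure in $G_K$, which is contained in $G_{K'}$. By infinite Galois theory, the fixed field $L := (K^{\sep})^H$ is an algebraic extension of $K'$ with $\Gal(K^{\sep}/L) = H$. Then $L$ contains the infinite field $K$ and so is itself infinite; its characteristic is not $2$; its absolute Galois group $H$ is topologically finitely generated by construction; and $X/L$ remains a split hyperelliptic curve, since its Weierstrass points are already $K'$-rational.

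Applying the previous theorem over $L$, we obtain $|X(L)| = \infty$. By Galois descent, $X(L) = X(K^{\sep})^H$, and because the stabilizer in $G_K$ of any point of $X(K^{\sep})$ is an open subgroup of $G_K$, we have $X(K^{\sep})^H = X(K^{\sep})^{\langle \sigma_1,\ldots,\sigma_n\rangle}$. Thus the given set contains $U$ and therefore has non-empty interior.

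The only conceptually delicate point — and arguably the crux of the argument — is recognizing that although the hypothesis of topological finite generation of $G_K$ is absent from the corollary, it is recovered automatically once we pass from $G_K$ to the closed subgroup $H$ and apply the theorem over the corresponding fixed field $L$ rather than over $K$ itself. All remaining steps are standard verifications in infinite Galois theory.
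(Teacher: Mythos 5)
Your argument is correct and is exactly the ``immediate'' deduction the paper has in mind (the paper gives no written proof, asserting only that the corollary follows at once from the theorem): pass to a finite separable extension $K'$ splitting the curve, take $U=G_{K'}^n$, and apply the theorem to the fixed field of the closure of $\langle\sigma_1,\ldots,\sigma_n\rangle$, noting that this closed subgroup is topologically finitely generated and that fixed points of the subgroup and of its closure coincide because point stabilizers are open. No gaps; your identification of the recovery of finite generation as the key observation is exactly right.
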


This suggests the following conjecture:

\begin{conj}
If $X$ is any curve over an infinite field $K$, then
$$\{(\sigma_1,\ldots,\sigma_n)\in G_K^n\colon |X(K^{\sep})^{\langle \sigma_1,\ldots,\sigma_n\rangle}| = \infty\}\subset G_K^n$$
has non-empty interior.
\end{conj}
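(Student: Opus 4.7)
The plan is to bootstrap from the split hyperelliptic case already proved to arbitrary curves via a covering argument.  Since the conjecture imposes no hypothesis on $K$ beyond being infinite---in particular $G_K$ need not be topologically finitely generated---serious generalization of the Proposition is needed, though the Corollary above (for arbitrary hyperelliptic curves over infinite fields of characteristic $\neq 2$) already removes one such restriction and can be used as a black box.  First, standard reductions: passing to the normalization lets us assume $X$ is smooth projective, and passing to a finite separable extension $L/K$ is harmless, since $G_L^n$ is open in $G_K^n$ and any non-empty open subset of $G_L^n$ remains one of $G_K^n$.  Thus we may enlarge $K$ so that $X(K) \neq \emptyset$ and so that any prescribed finite auxiliary data becomes $K$-rational.

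Second, choose a finite separable morphism $\pi\colon X \to \bbP^1$ of some degree $d$, and after extending $K$ arrange that the branch locus $B \subset \bbP^1$ consists of $K$-rational points and that each local monodromy is a transposition.  In characteristic $\neq 2$, Riemann--Hurwitz ensures $|B|$ is even, and the double cover $Y\colon y^2 = \prod_{p \in B}(x - p)$ is a split hyperelliptic curve over $K$.  Applying the Corollary to $Y$ produces an open $U \subset G_K^n$ such that for every $(\sigma_1,\ldots,\sigma_n) \in U$, writing $H = \overline{\langle \sigma_1,\ldots,\sigma_n\rangle}$ and $M = (K^{\sep})^H$, the set $Y(M)$ is infinite.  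Each $M$-point of $Y$ lying over $x_0 \in \bbP^1(M)$ records the fact that the discriminant of the fibre $\pi^{-1}(x_0)$ is a square in $M$, equivalently, that the Galois group of the splitting field of the fibre over $M$ is contained in $A_d$.

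The main obstacle is transferring these discriminant-square fibres into honest $M$-points of $X$: lying in $A_d$ is a real constraint, but for $d \geq 3$ it does not force a fibre to contain an $M$-rational point.  One natural attempt to close this gap is to refine the Hales--Jewett colouring used in the proof of Proposition~\ref{MainProp} so that it takes values in a finer invariant than $K^\times/(K^\times)^2$---for instance, classifying the fibre by its class in a suitable cohomology set $H^1(K,S_d)$, modulo an appropriate equivalence---and to rerun the combinatorial argument so as to force the chosen fibre to split completely over $M$ for infinitely many $x_0$.  The case of characteristic $2$ would require a parallel Artin--Schreier construction replacing squares by $\wp$-images, and removing even the mild finiteness hypothesis hidden in the colouring (the finiteness of the relevant cohomology set, which was the role of finite generation of $G_K$ in Proposition~\ref{MainProp}) appears to be the deepest remaining difficulty.
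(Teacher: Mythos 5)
The statement you are trying to prove is stated in the paper as a \emph{conjecture}: the authors give no proof of it, and they place it immediately after the hyperelliptic corollary precisely because their method does not extend beyond (split) hyperelliptic curves. So the question is whether your proposal closes that gap, and it does not --- indeed you concede as much in your third paragraph. The fatal step is exactly the one you flag: a point of $Y(M)$ over $x_0$ tells you only that the discriminant of the fibre $\pi^{-1}(x_0)$ is a square in $M$, i.e.\ that the monodromy of the specialized fibre lands in $A_d$, and for $d\ge 3$ this in no way produces an $M$-rational point of $X$ in that fibre. Your proposed repair --- recolouring by a class in $H^1(M,S_d)$ and rerunning Hales--Jewett --- is not an argument but a wish: the combinatorial engine of Proposition~\ref{MainProp} works because the colour set $Q=K^\times/(K^\times)^2$ is a finite \emph{abelian} group and monochromaticity along a combinatorial line makes the product $\prod_j(c-r_c-s_ca_j)$ a product of $2g+2$ elements of a single square class, hence a square. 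That final multiplicative step has no analogue for $S_d$-torsors: there is no operation on $H^1(M,S_d)$ under which ``all $2g+2$ fibres in the line have the same class'' forces any single fibre to split completely, so Hales--Jewett gives you nothing you can convert into rational points. Until that transfer mechanism exists, the covering setup (simple branching, even $|B|$, the auxiliary curve $Y$) is scaffolding around an empty core.

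Two smaller remarks. First, your opening worry about removing finite generation of $G_K$ is a red herring: the set in the conjecture ranges over fixed fields $M=(K^{\sep})^{\overline{\langle\sigma_1,\ldots,\sigma_n\rangle}}$, and $G_M$ is by construction topologically generated by $n$ elements, so the finiteness of $M^\times/(M^\times)^2$ (and of $\Hom(G_M,S_d)$ up to conjugacy) is automatic; this observation is exactly how the paper's corollary follows from its theorem, with $U=G_L^n$ for $L$ a splitting extension. Second, your reductions (normalizing, enlarging $K$, using openness of $G_L^n$ in $G_K^n$) are all sound and match the paper's implicit use of them. The proposal is therefore best read as a reasonable research programme identifying the genuine obstruction --- nonabelian specialization of covers versus the abelian square-class trick --- rather than a proof; as it stands, the conjecture remains open both in the paper and after your attempt.
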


\section{Infinite rank for hyperelliptic Jacobians}

We begin with an extension of a lemma of Silverman \cite{Silverman}.

\begin{prop}
\label{Silverman}
Let $A$ be an abelian variety over a field $K$ which is finitely generated over its prime field.
Then for all $d\in \bbN$
$$\bigcup_{\{L\mid K\subset L\subset K^{\sep},\; [L:K] \le d\}} A(L)_{\tor}$$
is finite.
\end{prop}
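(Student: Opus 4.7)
The plan is to adapt the proof of Silverman's original lemma for number fields by spreading $A$ out to an abelian scheme and reducing at closed points with finite residue fields. After inverting finitely many elements, we obtain a smooth integral affine domain $R$ of finite type over the prime field $\bbZ$ or $\bbF_p$ with fraction field $K$, together with an abelian scheme $\mathcal{A} \to S = \Spec R$ whose generic fiber is $A$. For each finite separable extension $L/K$ with $[L:K]\le d$, the normalization $\tilde S_L$ of $S$ in $L$ is finite of degree $\le d$, and any closed point $\tilde s \in \tilde S_L$ above a closed point $s \in S$ has finite residue field with $[\kappa(\tilde s):\kappa(s)] \le d$.

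The main tool is specialization. By properness of $\mathcal{A}/S$, every $L$-point of $\mathcal{A}$ extends uniquely to a section over $\Spec \mathcal{O}_{\tilde S_L, \tilde s}$, yielding a reduction map $\mathrm{red}_{\tilde s}\colon A(L) \to \mathcal{A}_s(\kappa(\tilde s))$. Its kernel sits inside the formal group of $\mathcal{A}$ along the zero section and is therefore pro-$\ell$ for $\ell = \ch \kappa(s)$, so $\mathrm{red}_{\tilde s}$ is injective on prime-to-$\ell$ torsion. The Weil bounds give $|\mathcal{A}_s(\kappa(\tilde s))| \le (1+|\kappa(s)|^{d/2})^{2\dim A}$, a bound depending only on $d$ and $s$, not on $L$.

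In characteristic zero, choose two closed points $s_1, s_2 \in S$ of distinct residual characteristics; these exist because $S$ dominates $\Spec \bbZ$. For any $L$, the reductions at chosen closed points above $s_1$ and $s_2$ together embed $A(L)_{\tor}$ into a finite product of cardinality at most some $N$ depending only on $d$. The exponent of $A(L)_{\tor}$ is then at most $N$, so the union over all such $L$ is contained in the finite set $A[N!](K^{\sep})$, which settles the proposition.

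In characteristic $p$ the same two-point argument handles the prime-to-$p$ torsion uniformly. The essential obstacle is the $p$-power torsion, since every closed point of $S$ has residue characteristic $p$ and specialization provides no control. This requires separate input: one analyzes the $p$-adic Galois representation on the étale Tate module $T_p^{\mathrm{et}} A \simeq \bbZ_p^h$ and shows that, for $K$ finitely generated over $\bbF_p$, the image of $G_K$ in $\Aut T_p^{\mathrm{et}} A$ is large enough that the fixed $\bbQ_p/\bbZ_p$-submodules under any open subgroup of index $\le d$ have uniformly bounded order. This is the step that genuinely extends Silverman's original number-field argument.
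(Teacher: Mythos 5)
Your characteristic-zero argument is correct and takes a genuinely different route from the paper: the two-reduction trick at closed points of distinct residue characteristics bounds all of $A(L)_{\tor}$ at once, whereas the paper specializes through an iterated chain of discrete valuation rings down to a finite field and then must treat the residual $p$-power torsion by a separate Galois-theoretic argument. Your route is more elementary where it applies. One technical point to tighten: when $\dim S>1$ the local ring at a closed point of the normalization $\tilde S_L$ is not a valuation ring, so properness alone does not give the extension of an $L$-point to a section over $\Spec \mathcal{O}_{\tilde S_L,\tilde s}$; for torsion points this is repaired by noting that an $L$-point of the finite $S$-scheme $\mathcal{A}[n]$ spreads out over all of the normal scheme $\tilde S_L$ (its closure is finite and birational onto $\tilde S_L$), and that the zero section of $\mathcal{A}[n]$ is open and closed where $n$ is invertible. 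The paper sidesteps this entirely by only ever specializing across genuine DVRs, one codimension at a time.

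The genuine gap is in positive characteristic, and it is precisely the step you defer. You assert that one ``shows that the image of $G_K$ in $\Aut T_p^{\mathrm{et}}A$ is large enough that the fixed submodules under any open subgroup of index $\le d$ have uniformly bounded order,'' but this is not proved, and as phrased it is not the right statement: no largeness or open-image theorem is available here, and largeness alone cannot be the mechanism, since ruling out even the case of trivial image already requires the Mordell--Weil theorem over finitely generated fields. The paper's actual argument is different in kind: the image $G_{K,p}\subset \GL_r(\bbZ_p)$ is a closed subgroup of a compact $p$-adic analytic group, hence topologically finitely generated, hence admits only finitely many continuous actions on sets of cardinality $\le d$; the intersection of their kernels is open and corresponds to a single finite extension $M/K$ over which every $p$-power torsion point defined over some degree-$\le d$ extension is already rational; finiteness of $A(M)_{p-\tor}$ then follows from N\'eron's theorem. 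Without this (or an equivalent) input your proof does not close in characteristic $p$, which is the case the proposition is actually needed for in Theorem \ref{Main} when $K$ has positive characteristic.
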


\begin{proof}
Let $K_0 = K$ and $A_0 = A$.  We claim there exists a sequence $K_1,\ldots,K_m$ of fields, a sequence $A_1/K_1,\ldots,A_m/K_m$ of abelian varieties,
a sequence $(R_0,(\pi_0)),\ldots,(R_{m-1},(\pi_{m-1}))$ of valuation rings, and a sequence $\cA_0/R_0,\ldots,\cA_{m-1}/R_{m-1}$ of abelian schemes, with the following
properties.
\begin{itemize}
\item $K_m$ is finite.
\item Each $R_i$ is a localization of a finitely generated $\bbZ$-algebra.
\item The fraction field of $R_i$ is isomorphic to $K_i$.
\item The residue field of $R_i$ is isomorphic to $K_{i+1}$.
\item Via these isomorphisms the generic fiber and special fiber of $\cA_i$ are isomorphic to $A_i$ and $A_{i+1}$
respectively.
\end{itemize}

Indeed, every $K$ is the field of fractions of some finitely generated $\bbZ$-algebra $B\subset K$, and we prove the claim by induction on
the Krull dimension of $B$.  If $B$ is a field which is finitely generated over $\bbZ$, it is a finite field, so the base case is trivial.
As $\bbZ$ is an excellent ring \cite[IV~7.8.3~(iii)]{EGA}, the same is true for $B$ \cite[IV~7.8.3~(ii)]{EGA},
so the regular locus of $\Spec B$ is open \cite[IV~7.8.3~(iv)]{EGA} and non-empty since $B$ is an integral domain.
Replacing $B$ by
$B[1/b]$ for a suitable $b\in B$, we may assume that $B$ is regular.  The abelian variety $A$ extends to
an abelian scheme over some non-empty open subset of $\Spec B$, so inverting an additional non-zero element of $B$,
we may assume $\cA$ is an abelian scheme over $B$.  Let $P$ denote any prime ideal, minimal among non-zero primes of $B$.
Let $R_0 := B_P$, and let $\cA_0$ denote the abelian scheme obtained from $\cA$ by base change from $B$ to $B_P$.
As $B$ is regular, $B_P$ is a regular local domain of Krull dimension $1$, i.e., a discrete valuation ring.
Let $K_1$ be the residue field of $R_0$, or equivalently, the field of fractions of $B/P$, which is a finitely generated
$\bbZ$-algebra of Krull dimension less than that of $B$.
Let $A_1$ denote the special fiber of $\cA_0$.
By the induction hypothesis, we can
now construct the remaining terms of the sequence.

Given any extension $L/K$ of degree $\le d$, we iteratively construct extensions $L_i/K_i$ of degree $\le d$
and discrete valuation rings $S_i\supset R_i$ with fraction field $L_i$ and residue field $L_{i+1}$.  The integral
closure $S'_i$ of $R_i$ in $L_i$ is a semi-local Dedekind domain; localizing at any maximal ideal, we obtain $S_i$.
We define $L_{i+1}$ to be the residue field of $S_i$.  Since $S'_i$ is an integral domain, it is a torsion-free as $R_i$-module.
As $R_i$ is finitely generated over the excellent ring $\bbZ$, $S'_i$ is a finitely generated
$R_i$-module \cite[IV~7.8.3~(vi)]{EGA}.  By the classification of modules over a discrete valuation ring, $S'_i\cong R_i^{[L_i:K_i]}$.
Thus $S'_i/(\pi_i)$ is a vector space of dimension $[L_i:K_i]$ over
$R_i/(\pi_i) = K_{i+1}$.  The quotient $L_{i+1}$ of $S_i$ by its maximal ideal is a quotient $K_{i+1}$-module
of $S'_i/(\pi_i)$, and it follows that $[L_{i+1}:K_{i+1}] \le [L_i:K_i]$.

It is well-known that if $n$ is not divisible by the characteristic of $L_{i+1}$, no
non-trivial $n$-torsion point of $A_i(L_i) = \cA_i(L_i) = \cA_i(S_i)$ lies in the kernel of
the specialization homomorphism to $\cA_i(L_{i+1}) = A_{i+1}(L_{i+1})$.
If $A(L)_{p-\tor}$ and $A(L)_{p'-\tor}$ denote the $p$-power torsion and prime-to-$p$ torsion
respectively, then
$$A(L)_{\tor} = A(L)_{p-\tor}\oplus A(L)_{p'-\tor},$$
and there is an injective homomorphism $A(L)_{p'-\tor}\to A_m(L_m)$.  By the Lefschetz trace formula
$$|A_m(L_m)| \le (1+\sqrt{|L_m|})^{2\dim A} < (1+|K_m|^{d/2})^{2\dim A}.$$
It follows that
$$A(L)_{p'-\tor}\subset A(\bar K)[N],$$
where $N$ is the least common multiple of all integers prime to $p$ and less than or equal to $(1+|K_m|^{d/2})^{2\dim A}$.
To prove the proposition, then, it suffices to show that $|A(L)_{p-\tor}|$ is bounded above.

Let $T_p(A)$ denote the physical $p$-adic Tate module of $A$, i.e.,
$$T_p(A) := \varprojlim_nA(K^{\sep})[p^n].$$
Thus $T_p(A)$ is a free $\bbZ_p$-module with a continuous $G_K$ action.  Let $G_{K,p}$ denote the image
of $G_K$ in $\Aut(T_p(A))\cong \GL_r(\bbZ_p)$.  As $\GL_r(\bbZ_p)$ is virtually $p$-adic analytic, its
closed subgroup $G_{K,p}$ is finitely generated.  Let $G^{\le d}_{K,p}\subset G_{K,p}$
denote the intersection of all kernels of continuous actions of $G_{K,p}$ on sets of cardinality $\le d$.  As
$G_{K,p}$ is finitely generated, there are finitely many such actions,
each corresponding to a homomorphism from $G_{K,p}$ to a symmetric group, and $G^{\le d}_{K,p}$,
as the intersection of the kernels of these homomorphisms,
is of finite index in $G_{K,p}$.

The inverse image of $G^{\le d}_{K,p}$ under the quotient map $G_K\to G_{K,p}$ is an open subgroup of $G_K$ and therefore
of the form $G_M$ for some finite extension $M$ of $K$.  Let $L\subset K^{\sep}$ be any separable extension of $K$ of degree $\le d$.
Suppose $a\in  A(K^{\sep})_{p-\tor}$ is fixed by every element of $G_L$.  Then its orbit under $G_K$ has $\le d$ elements.
Thus, its orbit under $G_{K,p}$ has $\le d$ elements, and it follows that $a$ is fixed by $G^{\le d}_{K,p}$ and therefore
by $G_M$.

Since $M$ is finite over $K$, it is finitely generated over its prime field, so by N\`eron's theorem \cite{Neron},
$A(M)$ is finitely generated.  In particular, for every separable extension $L/K$ of degree $\le d$,
$$A(L)_{p-\tor} = A(K^{\sep})_{p-\tor}^{G_L} \subset A(K^{\sep})_{p-\tor}^{G_M} = A(M)_{p-\tor}$$
is bounded independently of $L$.

\end{proof}

We need the following Chebotarev-type lemma in commutative algebra, and lacking a precise reference, we provide a proof.

\begin{prop}
\label{Chebotarev}
Let $R_0$ be a finitely generated $\bbZ$-algebra and $\phi\colon R_0\to R$ a homomorphism such that $R$ is an infinite, finitely generated $\phi(R_0)$-module.
Then there exist infinitely many maximal ideals $\m$ of $R$ such that
$$|R/\m| = |R_0/\phi^{-1}(\m)| = q < \infty.$$
\end{prop}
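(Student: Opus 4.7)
The plan is to reduce to the case where $\phi$ embeds a domain $R_0$ into a domain $R$ and then apply Chebotarev's density theorem to a suitable finite Galois cover. First, I replace $R_0$ by $\phi(R_0)$, so $R_0 \subseteq R$ with $R$ a finite $R_0$-module; then $R$ is itself a finitely generated $\bbZ$-algebra (take $\bbZ$-algebra generators of $R_0$ together with $R_0$-module generators of $R$), hence Jacobson, so every maximal ideal of $R$ or of $R_0$ automatically has finite residue field. Because $R$ is Noetherian with nilpotent nilradical $N$, a filtration along $R \supset N \supset N^2 \supset \cdots$ shows that if $R$ is infinite then some minimal prime $\mathfrak{p}$ of $R$ has $|R/\mathfrak{p}| = \infty$; passing to $R/\mathfrak{p}$ (and replacing $R_0$ by its image there) preserves the hypotheses, and any maximal ideal of $R/\mathfrak{p}$ satisfying the conclusion lifts to a maximal ideal of $R$ containing $\mathfrak{p}$ with the same residue-field identity. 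So we may assume $R_0 \subseteq R$ are both integral domains.

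Next, set $F_0 := \operatorname{Frac}(R_0)$, $F := \operatorname{Frac}(R)$, let $F^s \subseteq F$ be the largest separable extension of $F_0$ inside $F$, and let $L \supseteq F^s$ be the Galois closure of $F^s/F_0$ inside a fixed algebraic closure of $F_0$. Write $R'$, $R^*$, $R_L$ for the integral closures of $R_0$ in $F^s$, $F$, $L$ respectively; each is a finite $R_0$-module, because finitely generated $\bbZ$-algebras are Japanese (N-2). We have $R \subseteq R^*$ and $R' \subseteq R_L$. I then apply the Chebotarev density theorem for finite \'etale Galois covers of normal integral schemes of finite type over $\bbZ$ (as in Serre, \emph{Zeta and L-functions}) to $\Spec R_L \to \Spec R_0$ restricted to its \'etale locus $U$: $U$ is dense open in $\Spec R_0$ and contains infinitely many closed points (since $R_0$ is infinite and Jacobson, hence has infinitely many maximal ideals, while $U$ misses only a proper closed subscheme). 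Chebotarev then furnishes infinitely many maximal ideals $\m_0 \in U$ with trivial Frobenius, i.e., such that every prime $\mathfrak{n}_L \subset R_L$ lying over $\m_0$ satisfies $R_L/\mathfrak{n}_L = R_0/\m_0$.

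Finally I descend to $R$. For such an $\m_0$, fix $\mathfrak{n}_L$ and set $\mathfrak{n}' := \mathfrak{n}_L \cap R'$. The squeeze $R_0/\m_0 \hookrightarrow R'/\mathfrak{n}' \hookrightarrow R_L/\mathfrak{n}_L = R_0/\m_0$ gives $R'/\mathfrak{n}' = R_0/\m_0$. Since $F/F^s$ is purely inseparable, $\Spec R^* \to \Spec R'$ is radicial, so there is a unique prime $\mathfrak{n} \subset R^*$ over $\mathfrak{n}'$, and $R^*/\mathfrak{n}$ is a purely inseparable extension of the finite (hence perfect) field $R'/\mathfrak{n}'$, therefore equal to $R_0/\m_0$. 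Setting $\m := \mathfrak{n} \cap R$, integrality of $R \subseteq R^*$ makes $\m$ maximal, and the analogous squeeze $R_0/\m_0 \hookrightarrow R/\m \hookrightarrow R^*/\mathfrak{n} = R_0/\m_0$ collapses to $R/\m = R_0/\phi^{-1}(\m)$ of common finite cardinality $q = |R_0/\m_0|$. Distinct $\m_0$ give distinct $\m$ via $\m \cap R_0 = \m_0$, producing the required infinitude. The main obstacle is invoking Chebotarev in the correct arithmetic generality (finite \'etale Galois covers of a possibly singular, possibly mixed-characteristic finite-type $\bbZ$-scheme); once that is granted, the remaining integral-extension and residue-field bookkeeping through the tower $R_0 \subseteq R \subseteq R^*$ and the comparison ring $R_L$ is routine.
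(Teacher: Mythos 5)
Your proof is correct in substance but takes a genuinely different route from the paper's. Both arguments begin the same way (replace $R_0$ by $\phi(R_0)$, use the Jacobson property and the nilpotent-nilradical filtration to reduce to an infinite domain), but from there the paper splits into two cases according to whether $\Spec R$ dominates $\Spec\bbZ$: in mixed characteristic it cites a specialization lemma producing infinitely many quotients $R\twoheadrightarrow\bbF_p$ and exploits the fact that $\bbF_p$ has no proper subfields, so $R_0/\phi^{-1}(\m)\hookrightarrow R/\m\cong\bbF_p$ is forced to be an equality; in equal characteristic $p$ it runs a direct Lang--Weil count, comparing $|X(\bbF_{q^k})|\gg q^{dk}$ with the $O(q^{dk/2})$ points of $X_0$ of degree properly dividing $k$. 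You instead reduce everything to a single application of the arithmetic Chebotarev density theorem (completely split primes in the Galois closure $L$ of $F^s/F_0$), with the purely inseparable part $F/F^s$ handled separately via perfectness of finite residue fields. Your route is cleaner and more uniform, and it actually proves slightly more (complete splitting controls every prime of $R$ over $\m_0$); the cost is that it leans on a heavier black box whose equal-characteristic case is itself usually proved by exactly the Lang--Weil estimate the paper uses directly, and whose statement there must be taken in the form that accounts for constant-field extensions (completely split points exist only in degrees divisible by the constant-field degree of $L$, but that still yields infinitely many). Two small points to tighten in a careful write-up: $\Spec R_0$ need not be normal, so you should shrink $U$ into the regular locus before invoking the normal-base version of Chebotarev; and your parenthetical reason that $U$ contains infinitely many closed points is not quite right as stated (a proper closed complement can contain infinitely many closed points), though the conclusion is immediate since $U$ contains a nonempty affine open that is again an infinite finitely generated $\bbZ$-domain of positive dimension. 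Neither issue is a gap; the argument goes through.
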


\begin{proof}
Replacing $R_0$ by its image in $R$, we may assume without loss of generality that $\phi$ is injective.
By  \cite[V, \S3, Th.~3]{Bourbaki}, $R$ is a Jacobson ring and
if $\m$ is any maximal ideal of $R$, then $R/\m$ is a finite field.
As $R$ is a Jacobson ring, its nilradical $I$ must be an intersection of maximal ideals.
As $R$ is Noetherian, $I$ is finitely generated, so it is nilpotent.
Moreover, since $I$ is a quotient of $R^k$ for some $k$, for each $j\in \bbN$, $I^j \cong R^{k^j}/M_j$ for some $R$-module $M_j$.
Therefore, we have right exact sequences
$$I^j\otimes I\to I^j\to I^j\otimes (R/I)\to 0$$
and
$$(R/I)\otimes M_j\to (R/I)^{k^j}\to I^j\otimes (R/I)\to 0.$$
If $R/I$ is finite, the same is true of $I^j\otimes (R/I)$ for all $j$, and then
$$I^{j+1} = \mathrm{im}\,(I^j\otimes I\to I^j)$$
is of finite index in $I^j$ for all $j$.  This implies that $(0) = I^n$ is of finite index in $R$, contrary to assumption.
Thus $R/I$ is infinite, which, since all maximal ideals have finite residue fields, implies that $R$ has infinitely many many maximal ideals.

The image of the
morphism $\Spec R\to \Spec \bbZ$ is constructible, so either it contains
the generic point or it is finite.  In the former case, let $P$ denote a prime ideal of $R$ lying over $(0)$.
The field of fractions of $R/P$ is a finitely generated extension of $\bbQ$ and therefore
embeds in $\bbC$.  It follows \cite[Lemma 1.4]{Larsen-IJM} that there exists infinitely many rational primes $p$
such that $\bbF_p$ is a quotient of $R$.  If $\m$ is the kernel of a homomorphism $R\to \bbF_p$,
then the natural injective homomorphism $R_0/\phi^{-1}(\m)\to R/\m\cong \bbF_p$ is an isomorphism.

We may therefore assume that $R$ lies over a finite set of primes $\{p_1,\ldots,p_k\}$.  As $R$ is an integral domain,
it is an $\bbF_p$-algebra for some $p=p_i$, so the same is true of $R_0$.  Let $X:= \Spec R$ and $X_0 := \Spec R_0$.
Thus $X\to X_0$ is a dominant finite morphism, and it follows that $X$ and $X_0$ have the same dimension,
which we denote $d$.
Let $\bbF_q$ denote a finite extension of $\bbF_p$ such that
all irreducible components of $X\times \Spec\bbF_q$ and $X_0\times \Spec\bbF_q$ are geometrically
irreducible.  By the Lang-Weil estimate, $|X(\bbF_{q^k})|$ is bounded below by $q^{dk}/2$ for large $k$,
while
$$\sum_{\bigl\{j<k\bigm|\frac kj\in\bbN\bigr\}} |X_0(\bbF_{q^j})| = O(q^{dk/2}).$$
It follows that for every sufficiently large $k$, there exists $x\in X(\bbF_{q^k})$ which maps to a point
of $X_0$ with residue field $\bbF_{q^k}$.  This proves the proposition.
\end{proof}

\begin{thm}
\label{Main}
Let $X$ be a split hyperlliptic curve over a field $K$, and $\phi\colon X\to A$ a non-constant $K$-morphism from $X$ to an abelian variety $A$.
Suppose that $K$ is not locally finite, $\ch K\neq 2$, and  $G_K$ is  finitely generated.  Then the rank of $A$ over $K$ is infinite.
\end{thm}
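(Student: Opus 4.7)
The plan is to combine the rational points provided by Proposition \ref{MainProp} with a Silverman-style specialization argument, using Propositions \ref{Silverman} and \ref{Chebotarev} for the necessary torsion and reduction control, to exhibit arbitrarily many independent points in $A(K)$.

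First, Proposition \ref{MainProp} together with a pigeonhole on the finite set $L$ yields a single affine linear $\lambda(t)=\alpha t+\beta$ with $\alpha\neq 0$ and an infinite $T\subset K\setminus C$ such that $x(P_c)=\lambda(c)$ for some $P_c\in X(K)$ and every $c\in T$. Using that $K$ is not locally finite, I would then choose a finitely generated subfield $K_0\subset K$ over its prime field that contains all the $a_i$, $b_i$, $\alpha,\beta$, the coordinates of $Q_1,\ldots,Q_r\in A(K)$ chosen below, and some element transcendental over the prime field; in particular $K_0$ has transcendence degree at least one. The same pigeonhole applied inside $K_0\setminus (C\cap K_0)$ shows that $S:=T\cap K_0$ is still infinite.

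Suppose for contradiction that $\rank A(K)=r<\infty$ and fix $Q_1,\ldots,Q_r\in A(K)$ representing a basis modulo torsion, incorporated into $K_0$. For each $c\in S$ the point $P_c$ lies in $X(K_0(y_c))$ with $[K_0(y_c):K_0]\leq 2$, so writing $\phi(P_c)=\sum_j m_j(c)Q_j+\tau(c)$ with $m_j(c)\in\bbZ$ and $\tau(c)\in A(K)_{\tor}$, we have $\tau(c)\in A(K_0(y_c))_{\tor}\subset \bigcup_{[L:K_0]\leq 2}A(L)_{\tor}$, which is finite by Proposition \ref{Silverman}. A second pigeonhole produces an infinite $S'\subset S$ and some $\tau_0$ with $\tau(c)=\tau_0$ for every $c\in S'$, so that $\phi(P_c)-\tau_0\in \Lambda:=\bbZ\langle Q_1,\ldots,Q_r\rangle$ for all $c\in S'$.

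To close the argument, I would introduce $n=r+1$ formal parameters $T_1,\ldots,T_n$ and consider the generic points $P_{T_i}$ of $X$ over the function field of $X^n/K_0$. Since $\phi$ is non-constant and $J(X^n)\cong J(X)^n$, the images $\phi(P_{T_1}),\ldots,\phi(P_{T_n})$ are $\bbZ$-independent in $A(K_0(X^n))$ modulo $A(K_0)$ and torsion. A Silverman-type specialization theorem would then give that for all $(t_1,\ldots,t_n)$ outside a thin subset of $K_0^n$, the specializations $\phi(P_{t_i})\in A(K_0(y_{t_i}))\subset A(K)$ remain $\bbZ$-independent modulo torsion. Since $S'$ is infinite and $(S')^n\subset K_0^n$ is Zariski-dense, some $n$-tuple in $(S')^n$ avoids the thin set, producing $r+1$ independent elements of $A(K)$ and contradicting $\rank A(K)=r$.

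The main obstacle is this last step: implementing the Silverman-type specialization over a base field $K_0$ that is only finitely generated over its prime field, where the classical theorem is not directly available. This is exactly the role Propositions \ref{Silverman} and \ref{Chebotarev} are designed to play. Proposition \ref{Silverman} uniformly bounds the torsion in all quadratic extensions of $K_0$, while Proposition \ref{Chebotarev}, applied to finitely generated subrings $R_0\subset K_0$ enlarged by finitely many of the $c_i,y_{c_i}$, produces enough reductions to finite residue fields with matching residue degrees to run a counting/independence argument on the specializations in $\bar A(\bbF_q)$ and to show that the thin exceptional set can be avoided simultaneously inside $(S')^n$.
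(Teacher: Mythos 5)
Your setup (restricting to a finitely generated subfield $K_0$, pigeonholing on the finite set $L$, and using Proposition \ref{Silverman} to make the torsion contribution $\tau(c)$ constant on an infinite set $S'$) is sound and parallels the paper's opening moves. But the step that actually has to produce independent points --- the appeal to ``a Silverman-type specialization theorem'' over $K_0$ --- is a genuine gap, and it is not the step that Propositions \ref{Silverman} and \ref{Chebotarev} are designed to fill. Two concrete problems. First, no such specialization theorem is proved or cited; for the \emph{constant} family $A\times X^n\to X^n$ the locus where a fixed relation $\sum_i e_i\,\phi(P_{t_i})\in A_{\tor}$ holds is a countable union of proper subvarieties (preimages of torsion points under $\sum_i e_i\,\phi\circ \mathrm{pr}_i$), and over a countable field such a union can contain all rational points, so some genuinely new input is needed. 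Second, and more fatally, even granting an exceptional \emph{thin} set, your set $S'$ is by construction contained in a thin set: $c\in S'$ exactly when $(\lambda(c)-a_1)\cdots(\lambda(c)-a_{2g+2})$ is a square, i.e.\ $\lambda(c)$ lies in the image of the degree-$2$ map $x\colon X\to\bbP^1$, the prototype of a type-2 thin set. Hence Zariski-density of $(S')^n$ gives you nothing, and the exceptional set cannot be avoided this way. Your closing suggestion that Proposition \ref{Chebotarev} lets one run an ``independence argument on the specializations in $A(\bbF_q)$'' cannot work as stated, since every point of $A(\bbF_q)$ is torsion.

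The paper obtains independence by a different, Galois-theoretic mechanism, and this is where Proposition \ref{Chebotarev} actually enters. Having fixed $F_0$ the fraction field of $R_0$ and a finite extension $F$ (the compositum of the quadratic fields already produced), one applies Proposition \ref{Chebotarev} to the integral closure $R$ of $R_0$ in $F$ to find maximal ideals $\m_1,\ldots,\m_N$ with $|R/\m_i|=|R_0/(R_0\cap\m_i)|=q_i$ large; the Weil bound supplies $x_i\in\bbF_{q_i}$ with $\prod_j(x_i-\bar a_j)$ a nonsquare, and the Chinese remainder theorem supplies $c\in R_0\setminus C$ with $\lambda_i(c)\equiv x_i\pmod{\m_i}$ for every $i$. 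This forces the $K$-point guaranteed by Proposition \ref{MainProp} to lie in a quadratic extension $F_n/F_0$ \emph{not} contained in $F$, and independence follows because the span of the earlier points in $A(F_1\cdots F_n)\otimes\bbQ$ is fixed by $\Gal(F_1\cdots F_n/F_1\cdots F_{n-1})$ while the new point is not, the torsion ambiguity being controlled by Proposition \ref{Silverman}. To repair your argument, you should replace the specialization step by this construction.
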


\begin{proof}

Let $X$ have an affine open set of the form
$$y^2 = (x-a_1)\cdots(x-a_{2g+2}).$$
We define finite sets $C$ and $L$ as in Proposition~\ref{MainProp}.

Let $R_0$ denote a finitely generated $\bbZ$-subalgebra of $K$ such that $Z$ and all of the $a_i$
are defined over $R_0$.  As $K$ is not locally finite, without loss of generality, we may assume that
$R_0$ is infinite.  As $\ch K\neq 2$, we may assume that $2$ is invertible in $R_0$.
Similarly, as the $a_i$ are pairwise distinct in $K$, we may assume $a_i-a_j$ is invertible in $R_0$ for
$1\le i < j \le 2g+2$.  Likewise, we may assume that the $t$-coefficient of every $\lambda\in L$ is invertible.
Let $F_0$ denote the fraction field of $R_0$.  If $F$ is any finite extension of $F_0$, the integral
closure $R$ of $R_0$ in $F$ is a finitely generated $R_0$-module (since $R_0$ is finitely generated over $\bbZ$), and therefore a
finitely generated $\bbZ$-algebra.

As $2$ is invertible in $R$, for every maximal ideal $\m$, $q := |R/\m|$ is odd.  As $R$ is finitely
generated, for any fixed $q$, the set of homomorphisms $R\to \bbF_q$
is finite.  Therefore, all but finitely many maximal ideals have residue field
cardinality larger than any given constant.

By Lemma~\ref{Chebotarev}, there is an infinite sequence $\m_1,\m_2,\m_3,\ldots$ of
maximal ideals of $R$ with $q_i := |R/\m_i|> c$ for all $i$, where $c:=2g^2+2g+3+2g\lceil\sqrt{g^2+2g+3}\rceil$, and such that $|R_0/R_0\cap \m_i|=q_i$.    The reduced equation
$$y^2 = (x-\bar a_1)\cdots(x-\bar a_{2g+2})$$
over $R/\m_i$ is non-singular
and of genus $g$, so it has at most $q+2g\sqrt q+1$ $\bbF_{q_i}$-points.
On the other hand, the $x$-coordinate gives a map
to the affine line which is two-to-one except at $2g+2$ points.  For $q_i>c$, 
we have
$2g\sqrt {q_i} < q_i-2g-3$, so there are fewer than $2q_i-2g-2$ $\bbF_{q_i}$-points on our reduced affine curve.
It follows that there exists $x_i\in \bbF_{q_i}$ such that
$(x_i-\bar a_1)\cdots(x_i-\bar a_{2g+2})$ has no square root in $\bbF_{q_i}$.

By the coprimality of
maximal ideals, the natural homomorphism
$$R_0\to \prod_{i=1}^n R_0/(\m_i\cap R_0)$$
is surjective for all $n$.
Writing $L = \{\lambda_1,\ldots,\lambda_N\}$ we can choose $c\in R_0\setminus C$ such that
$\lambda_i(c)$ reduces (mod $\m_i$) to $x_i$.  This implies that $\lambda_i(c)$ cannot be the $x$-coordinate
of an $F$-point of $E$ for any $i$.  On the other hand, by Proposition~\ref{MainProp}, for some $i$,
$\lambda_i(c)$ is the $x$-coordinate of some point of $X(K)$.  This point can be defined over a quadratic extension $F_i\subset K$
of $F_0$.

We can now proceed iteratively, finding a sequence of points $P_n\in X(F_i)$, such that $\phi(P_n)\not\in A(F_1\cdots F_{n-1})$.
To find $P_1$, we define $F=F_0$.
To find $P_n$ for $n\ge 2$, we define $F$ to be the compositum $F_1\cdots F_{n-1}$.
By Proposition~\ref{Silverman}, $\phi(P_n)\otimes 1$ is non-zero in $A(K)\otimes\bbQ$ for all $n$ sufficiently large.
Moreover, if $\phi(P_n)\otimes 1\neq 0$, it cannot lie in
$$\Span\{\phi(P_i)\otimes 1\mid 1\le i < n\}$$
because in
$A(F_1\cdots F_n)\otimes\bbQ\subset A(K)\otimes \bbQ$, this span is fixed pointwise by $\Gal(F_1\cdots F_n/F_1\cdots F_{n-1})$.

\end{proof}

\begin{cor}
If $K$ is not locally finite or of characteristic $2$ and $G_K$ is  finitely generated, then for an elliptic
curve $E/K$ such that $E[2](K)\cong (\bbZ/2\bbZ)^2$, the rank of $E$ over $K$ is infinite.
\end{cor}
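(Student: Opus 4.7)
The plan is to deduce the corollary directly from Theorem~\ref{Main} by realizing $E$ as the target of a non-constant $K$-morphism from a split hyperelliptic curve. Since $E[2](K)\cong(\bbZ/2\bbZ)^2$ and $\ch K\neq 2$, the curve $E$ admits a Weierstrass model $y^2=(x-a_1)(x-a_2)(x-a_3)$ with pairwise distinct $a_1,a_2,a_3\in K$. The only genuine obstruction is that the defining polynomial has degree $3$, whereas a split hyperelliptic curve of genus $g$ is presented by a polynomial of even degree $2g+2$; for $g=1$ one needs a quartic with four distinct $K$-rational roots.

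To produce such a model, I would apply a M\"obius change of coordinates in $x$ that sends a $K$-rational non-Weierstrass point of $E$ to infinity. Since $K$ is not locally finite, it is in particular infinite, so there exists $a_4\in K\setminus\{a_1,a_2,a_3\}$. Setting $u:=1/(x-a_4)$ and $v:=y/(x-a_4)^2$, a direct computation gives
$$v^2=u\prod_{i=1}^{3}\bigl(1+u(a_4-a_i)\bigr),$$
a quartic in $u$ whose four roots $0$ and $-(a_4-a_i)^{-1}$ (for $i=1,2,3$) are pairwise distinct elements of $K$. Let $X$ denote the resulting curve; by construction $X$ is a split hyperelliptic curve of genus $1$ over $K$, and the substitution extends to a $K$-isomorphism $\phi\colon X\to E$ of smooth projective genus-$1$ curves.

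I would then apply Theorem~\ref{Main} to this $X$ together with the non-constant $K$-morphism $\phi\colon X\to E$, viewing $E$ as its own Jacobian. All hypotheses of the theorem are satisfied: $K$ is not locally finite, $\ch K\neq 2$, and $G_K$ is topologically finitely generated. The theorem then yields that the rank of $E$ over $K$ is infinite, which is exactly the corollary. The main step is the coordinate change turning the cubic model into a split quartic one; once a fourth $K$-rational point is chosen, the rest is a formal invocation of Theorem~\ref{Main}.
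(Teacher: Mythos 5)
Your overall strategy is the same as the paper's: the published proof simply asserts that, because $E[2]\subset E(K)$, the curve $E$ ``is itself split hyperelliptic,'' and then applies Theorem~\ref{Main} with $X=A=E$ and $\phi$ the identity. You try to make that assertion explicit, and your justification has a gap. The substitution $u=1/(x-a_4)$, $v=y/(x-a_4)^2$ does give $v^2=u\prod_{i=1}^{3}\bigl(1+u(a_4-a_i)\bigr)$, but this quartic in $u$ is \emph{not} monic: its leading coefficient is $(a_4-a_1)(a_4-a_2)(a_4-a_3)=f(a_4)$, where $f(x)=(x-a_1)(x-a_2)(x-a_3)$. The paper's definition of a split hyperelliptic curve requires the form $y^2=(x-b_1)\cdots(x-b_{2g+2})$ with leading coefficient $1$, and this is not cosmetic: the proof of Proposition~\ref{MainProp} concludes that the \emph{monic} product $\prod_j(\lambda(c)-a_j)$ lies in $(K^\times)^2$, and the evenness of the degree is exactly what makes the factor $s_c^{-2(g+1)}$ a square. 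A model $v^2=A\prod_j(u-b_j)$ with $A\notin(K^\times)^2$ cannot be rescaled into the required shape; the monic curve $w^2=\prod_j(u-b_j)$ is then the quadratic twist of $E$ by $A$ and is in general not isomorphic to $E$.

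Concretely, your construction works only if $a_4$ can be chosen with $f(a_4)\in(K^\times)^2$, i.e., only if $a_4$ is the $x$-coordinate of a $K$-point of $E$ outside $E[2]$. Equivalently: a genus-$1$ curve in the paper's split hyperelliptic form always carries two rational points at infinity of its quartic model, and these are not fixed by the hyperelliptic involution, so an elliptic curve with $E(K)=E[2](K)$ (only four rational points) admits no such model at all. The existence of a non-$2$-torsion point in $E(K)$ is not among the hypotheses of the corollary, so this step requires an argument; picking an arbitrary $a_4\in K\setminus\{a_1,a_2,a_3\}$, as you do, does not suffice. (To be fair, the paper's own one-line proof elides exactly the same point; your write-up makes the missing step visible without closing it.)
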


\begin{proof}
As the $2$-torsion of $E$ is rational, $E$ is itself split hyperelliptic.  Setting $X = A = E$ and letting $\phi$ denote
the identity map, the corollary follows immediately from Theorem~\ref{Main}.
\end{proof}

\begin{cor}
If $F_0$ is of characteristic zero, then for every abelian surface $A/F_0$,
there exists a finite extension $F$ of $F_0$ such that for every extension $K$ of $F$ with
$G_K$ finitely generated, $A$ has infinite rank over $K$.  The same is true in characteristic $p>2$ if $F_0$
is not locally finite and $A$ is ordinary.
\end{cor}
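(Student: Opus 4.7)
My plan is to reduce to Theorem~\ref{Main} by producing, after a finite extension $F/F_0$, a split hyperelliptic curve $X/F$ together with a non-constant $F$-morphism $\phi\colon X\to A$. The Mordell-Weil rank is an isogeny invariant over any fixed ground field (an isogeny $A'\to A$ defined over $K$ induces an isomorphism $A'(K)\otimes\bbQ\to A(K)\otimes\bbQ$), so I may replace $A$ by any $F_0$-isogenous abelian surface --- in particular, after a finite extension of $F_0$, by a principally polarized surface $A'$.

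Next I will invoke the classical dichotomy for principally polarized abelian surfaces: $A'$ becomes, over a further finite extension of $F_0$, either (i) the Jacobian $J(C)$ of a smooth projective genus-$2$ curve $C$, or (ii) a product $E_1\times E_2$ of elliptic curves (with the product polarization). In characteristic $0$ this is standard; in characteristic $p>2$, the ordinary hypothesis on $A$ guarantees the analogous statement, since the ordinary locus in the moduli space $\cA_2$ is exhausted by the Jacobian locus together with the products-of-elliptic-curves locus.

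In case (i), I enlarge $F$ so that all six Weierstrass points of $C$ are $F$-rational; then $C$ is split hyperelliptic over $F$, and the Abel-Jacobi embedding $C\hookrightarrow J(C)=A'$ based at a rational Weierstrass point is a non-constant $F$-morphism. In case (ii), I enlarge $F$ so that $E_1[2]\subset E_1(F)$; then $E_1$ is a split hyperelliptic curve of genus $1$, and the inclusion $E_1\hookrightarrow E_1\times E_2=A'$ is non-constant. Composing the resulting morphism $X\to A'$ with an $F$-isogeny $A'\to A$ produces the desired $\phi$. For any extension $K/F$ with $G_K$ finitely generated, such a $K$ is automatically not locally finite --- automatic in characteristic $0$, and inherited from the hypothesis on $F_0$ in characteristic $p$, since any subfield of a locally finite field is locally finite. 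Theorem~\ref{Main} then applies and yields infinite rank of $A$ over $K$.

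The hard part will be justifying the classification step in positive characteristic. In characteristic $0$, every principally polarized abelian surface over an algebraically closed field is either a Jacobian or a product of elliptic curves, without additional hypotheses. In characteristic $p$, without the ordinary assumption, one must contend with supersingular and mixed strata where the classification is more delicate or where the dichotomy fails to descend conveniently to a finite extension of $F_0$; the ordinary hypothesis is precisely what rules these pathologies out. The remaining ingredients --- existence of a principal polarization up to isogeny, rationalization of Weierstrass points or of $E_1[2]$ over a finite extension, and application of Theorem~\ref{Main} --- are essentially formal.
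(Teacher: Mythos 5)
Your argument is correct, but it takes a genuinely different route from the paper's. The paper never classifies principally polarized abelian surfaces: after replacing $A$ by an isogenous principally polarized surface, it invokes Chai's theorem that the Hecke orbit (isogeny class) of a principally polarized abelian surface is Zariski-dense in the moduli space $\cA_2$ --- this is exactly where the ordinariness hypothesis enters in characteristic $p$ --- and combines it with the Torelli theorem, which makes the Jacobian locus a non-empty open subset of $\cA_2$; together these show $A$ is isogenous over $\bar F_0$ to the Jacobian of a smooth genus-$2$ curve, so the product case never arises. You instead use the classical dichotomy that every principally polarized abelian surface over an algebraically closed field is either the Jacobian of a smooth genus-$2$ curve or a product of two elliptic curves with the product polarization, and you dispose of the product case by reducing to the elliptic-curve case. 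That is a legitimate and arguably more elementary alternative, but your diagnosis of where ordinariness is needed is off: the dichotomy is due to Weil in characteristic $0$ and to Oort--Ueno in arbitrary characteristic, with no ordinariness assumption --- all of $\cA_2$, not merely its ordinary locus, is the union of the Jacobian locus and the product locus --- so your argument, properly sourced, would in fact prove the characteristic-$p$ statement without assuming $A$ ordinary; the hypothesis is only needed for the paper's Hecke-orbit argument. One small point to tighten: ``split hyperelliptic'' here means a \emph{monic} model of even degree $2g+2$ with all roots in the base field, so beyond rationalizing the Weierstrass points of $C$ (resp.\ the $2$-torsion of $E_1$) you need a further finite extension making the leading coefficient of the sextic a square (resp.\ producing a monic quartic model of $E_1$, e.g.\ by adjoining a non-$2$-torsion point); since you are free to enlarge $F$, this is harmless.
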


\begin{proof}
Every abelian variety over an algebraically closed field is isogenous to a principally polarized abelian variety.
In characteristic zero, the ``Hecke  orbit'' of principally polarized abelian varieties
isogenous to a fixed $g$-dimensional principally polarized abelian variety $A$ is always Zariski-dense in the moduli space $A_g$,
and the same thing is true in characteristic $p>0$ if $A$ is ordinary \cite{Chai}.
However, by the Torelli theorem, there exists a non-empty open subset of the moduli space of principally polarized abelian varieties
which are Jacobians of curves of genus $2$.  Thus, every abelian surface admits a non-constant morphism from some hyperelliptic curve $X$,
defined over $\bar F_0$ and therefore over some finite extension $F$ of $F_0$.  Enlarging $F$ if necessary, we may assume that $X$ is split and apply
Theorem~\ref{Main}.
\end{proof}

\end{document}